\newcommand*{\hurl}  [2][www.]{\href{http://#1#2}{\nolinkurl{#2}}}
\newcommand*{\hemail}[1]{\href{mailto:#1}{\nolinkurl{#1}}}
\renewcommand{\epsilon}{\varepsilon}
\renewcommand{\setminus}{\smallsetminus}
\numberwithin{equation}{section}
\newtheorem{theorem}[equation]{Theorem}
\newtheorem{proposition}[equation]{Proposition}
\newtheorem{corollary}[equation]{Corollary}
\newtheorem{example}[equation]{Example}
\theoremstyle{definition}
\theoremstyle{remark}
\newtheorem{remark}[equation]{Remark}
\newcommand{\Q}{\mathbb Q}
\newcommand{\Z}{\mathbb Z}
\newcommand{\N}{\mathbb N}
\newcommand{\F}{\operatorname{F}}
\newcommand{\cohom}[3]{H^{{\raise1pt\hbox{$\scriptstyle#1$}}}(#2\>\!,#3)}
\newcommand{\tatecohom}[3]%
  {\widehat H^{{\raise1pt\hbox{$\scriptstyle#1$}}}(#2\>\!,#3)}
\newcommand{\Cohom}[3]%
  {H^{{\raise1pt\hbox{$\scriptstyle#1$}}}\big(#2\>\!,#3\big)}
\newcommand{\Tatecohom}[3]%
  {\widehat H^{{\raise1pt\hbox{$\scriptstyle#1$}}}\big(#2\>\!,#3\big)}
\newcommand{\homol}[3]{H_{{\lower1pt\hbox{$\scriptstyle#1$}}}(#2\>\!,#3)}
\newcommand{\homolog}[2]{H_{{\lower1pt\hbox{$\scriptstyle#1$}}}(#2)}
\newcommand{\redhomolog}[2]{\tilde{H}_{{\lower1pt\hbox{$\scriptstyle#1$}}}(#2)}
\newcommand{\colim}{\varinjlim}
\newcommand{\coker}{\operatorname{coker}}
\newcommand{\OFG}{\mathcal O_{\mathcal F}G}
\tikzset{
  % style to apply some styles to each segment of a path
  on each segment/.style={
    decorate,
    decoration={
      show path construction,
      moveto code={},
      lineto code={
        \path [#1]
        (\tikzinputsegmentfirst) -- (\tikzinputsegmentlast);
      },
      curveto code={
        \path [#1] (\tikzinputsegmentfirst)
        .. controls
        (\tikzinputsegmentsupporta) and (\tikzinputsegmentsupportb)
        ..
        (\tikzinputsegmentlast);
      },
      closepath code={
        \path [#1]
        (\tikzinputsegmentfirst) -- (\tikzinputsegmentlast);
      },
    },
  },
  % style to add an arrow in the middle of a path
  mid arrow/.style={postaction={decorate,decoration={
        markings,
        mark=at position .5 with {\arrow [scale=1.5]{stealth}}
      }}},
}
\DeclareMathOperator*{\tensor}{\otimes}
\title[Towards the rational homology of generalised Thompson groups]{Towards computing the rational homology and assembly maps of generalised Thompson groups}
\author{Conchita Mart\'inez-P\'erez}
\address{Conchita Mart\'inez-P\'erez, Departamento de Matem\'aticas, Universidad de Zaragoza,
50009 Zaragoza, Spain} \email{conmar@unizar.es}
\author{Brita Nucinkis}
\address{Brita E.~A.~Nucinkis, Department of Mathematics, Royal Holloway, University of London, Egham, TW20 0EX, UK.}\email{brita.nucinkis@rhul.ac.uk}
\thanks{The first named author was supported by  Gobierno de Arag\'on, European Regional Development Funds and
MTM2015-67781-P. This paper grew out of discussions with M. Varisco, who pointed out the connection to algebraic K-theory. We thank Marco for his valuable input. We also thank the referee of a previous version of this paper for carefully reading the paper and making crucial comments.}
\date{\today}
\keywords{}
\subjclass[2010]{%\color{red}
20J05, 19D55}
\begin{document}

\begin{abstract} Let $V_r(\Sigma)$ be the generalised Thompson group defined as the automorphism group of a valid, bounded, and complete Cantor algebra. We show that that for every $n \in \N$ there is a $k>n,$ such that there exists a $k$-dimensional $n$-connected simplicial complex $K$ such that $V_r(\Sigma)$ acts on $K$ with finite stabilisers.  We also determine the number of conjugacy classes of finite cyclic subgroups of a given order $m$ in Brin-Thompson groups.
We apply our computations to the rationalised Farrell-Jones assembly map in algebraic $K$-theory.

\end{abstract}

\maketitle

\section{Introduction}

There are many well-known generalisations of Thompson's group $V$ due to, for example, Higman \cite{higman}, Stein \cite{stein}, and Brin \cite{brin}, which share many of the properties of $V$. For example, they all contain infinite torsion and are of type $\F_\infty$ \cite{brown2, fluch++}. Furthermore, the Higman-Thompson groups $V_{n,r}$ and the Brin-Thompson groups $sV$ are simple \cite{higman, brin-09}. It turns out that all these are examples of automorphism groups $V_r(\Sigma)$ of valid, bounded, and complete Cantor algebras. For notation and definitions we refer to~\cite{BritaConch, confranbri}, where it was shown that all $V_r(\Sigma)$ and  centralisers of their finite subgroups are of type $\F_\infty$,  and where an explicit description of these centralisers was given.

Brown showed in \cite{brown92} that Thompson's group $V$ is rationally acyclic. Furthermore, he conjectured that $V$ is acyclic, which was recently proved in \cite{szymik}. The main theorem of \cite{brown92} states that for every Thompson-Higman group $V_{k,r}$ and every $n \in \N$ there is a $n$-dimensional $n-1$-connected simplicial complex on which $V_{k,r}$ acts with finite stabilisers. In this paper we prove the following generalisation:

\begin{theorem}\label{cont}
Let $U_r(\Sigma)$ be a complete, valid, and bounded Cantor algebra, and $V_r(\Sigma)$ its automorphism group. Then, for every $n \in \N$ there exists a $k>n$ and a $k$-dimensional $n$-connected simplicial complex $K$ such that $V_r(\Sigma)$ acts on $K$ with finite stabilisers.
\end{theorem}

The complex $K$ will be obtained by truncating the Stein-complex $X$ as was done in \cite{brown92}. We also give a long exact sequence of the homology of the complexes $K/V_r(\Sigma),$ which potentially serve as a tool to compute the rational homology of $V_r(\Sigma).$

In \cite{BritaConch} it was shown that, for any finite subgroup $Q \leq V_r(\Sigma)$, there are only finitely many conjugacy classes of subgroups isomorphic to $Q$. Using the method employed there, in Proposition \ref{conjclass} we explicitly calculate the number of conjugacy classes of cyclic subgroups of a given order in the Brin-like groups $sV_{n,r},$  where $n$ denotes the arity of the descending operations. Note that $sV= sV_{2,1}.$

In Section~\ref{FJC} we discuss an application of our results to the algebraic $K$-theory of the integral group rings of these generalised Thompson groups.
We first review the Farrell-Jones Conjecture, and then explain how our computations, combined with the results from~\cite{LRRV, BritaConch, confranbri}, imply the following theorem.
This generalises the analysis that was carried out for Thompson's group~$T$ in~\cite{geoghegan-varisco}.
All terms and notation are explained in Section~\ref{FJC}.

\begin{theorem}
\label{assemble}
Suppose the rationalised Farrell-Jones Conjecture in algebraic $K$-theory holds for~$sV_{n,r}$.
Then $K_n(\Z[sV_{n,r}])\tensor_\Z\Q$ is isomorphic to
\begin{equation}
\label{eq:FJ-sV}
\bigoplus_{1\le m}
\bigoplus_{i=1}^{\sum_{\Omega\in A_m}(n-1)^{|\Omega|}+\beta(m,n,r)}
\bigoplus_{\substack{p+q=n\\0\leq p\\-1\le q}}
H_p(Z_{sV_{n,r}}(C^i_m);\Q)\tensor_{\Q[W_G(C^i_m)]}\Theta_{C^i_m}\Bigl(K_q(\Z[C^i_m])\tensor_{\Z}\Q\Bigr),
\end{equation}
where, for each $m\geq 1$  $\{C^i_m\}$ denotes a complete set of representatives of the conjugacy classes of finite cyclic groups of order~$m$ in~$sV_{n,r}$.

If the Leopoldt-Schneider Conjecture holds for all cyclotomic fields,
then $K_n(\Z[sV_{n,r}])\tensor_\Z\Q$ contains as a direct summand the subspace of~\eqref{eq:FJ-sV} indexed by~$q\ge0$.
\end{theorem}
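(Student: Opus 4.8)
The plan is to deduce Theorem~\ref{assemble} from the Farrell-Jones Conjecture in algebraic $K$-theory. The conjecture asserts that a certain assembly map, induced by the projection from the classifying space $E_{\mathcal{VC}}G$ for the family of virtually cyclic subgroups to a point, is an isomorphism on $K$-theory. After rationalisation, there is a well-known splitting of the source of this assembly map, due to the work collected in~\cite{LRRV}, expressing $K_n(\Z[G])\tensor_\Z\Q$ as a direct sum indexed by conjugacy classes of finite cyclic subgroups $C$ of~$G$, with summands of the form
\[
H_p\bigl(Z_G(C);\Q\bigr)\tensor_{\Q[W_G(C)]}\Theta_C\bigl(K_q(\Z[C])\tensor_\Z\Q\bigr),
\]
where $Z_G(C)$ is the centraliser, $W_G(C)=N_G(C)/Z_G(C)$ the associated Weyl group, and $\Theta_C$ the relevant Artin-defect (or idempotent-truncation) functor. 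Thus the first step is simply to recall this general splitting theorem and verify that its hypotheses are met for $G=sV$: namely that $sV$ satisfies the technical conditions needed to apply it (for instance that centralisers of finite subgroups are suitable, which follows from~\cite{BritaConch, confranbri}, where these centralisers are shown to be of type~$\F_\infty$ and explicitly described).

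\textbf{The combinatorial input.} The second step is to identify the indexing set precisely. Since $sV$ is known to contain only finitely many conjugacy classes of subgroups isomorphic to any fixed finite group~\cite{BritaConch}, and its finite subgroups are the finite cyclic ones relevant here, I would invoke Proposition~\ref{conjclass}, which computes the number of conjugacy classes of cyclic subgroups of given order~$m$ in~$sV=sV_{2,1}$. The point is that this count is exactly $2^{\phi(m)}-1$, which explains the range $1\le i\le 2^{\phi(m)}-1$ in~\eqref{eq:FJ-sV}; here $\phi$ is Euler's totient, reflecting the fact that a cyclic group of order~$m$ has $\phi(m)$ generators and these are permuted in the Brin-Thompson setting in a way that produces $2^{\phi(m)}-1$ distinct classes.

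\textbf{The homological truncation.} The third and most substantive step is to explain the upper bound $p\le\phi(m)s2^{s-1}$ on the first index of the homology. This is where Theorem~\ref{cont} enters: the centraliser $Z_G(C_m^i)$ is built out of (copies of) groups of the form $V_r(\Sigma)$ together with the finite symmetries, and the rational homology of these building blocks vanishes above degree $s2^{s-1}$. By the explicit description of centralisers in~\cite{BritaConch, confranbri}, $Z_G(C_m^i)$ is (up to finite-index or rational-homology-preserving modifications) a finite product of such generalised Thompson groups, the number of factors being controlled by $\phi(m)$. A Künneth argument then bounds the top nonvanishing degree of $H_p(Z_G(C_m^i);\Q)$ by $\phi(m)s2^{s-1}$, which is precisely the stated range. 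I would assemble these bounds to conclude that the summand vanishes outside the indicated ranges of $p$ and $q$, yielding~\eqref{eq:FJ-sV}.

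\textbf{The main obstacle} I anticipate is the bookkeeping in the homological truncation step: one must pin down the exact structure of $Z_G(C_m^i)$ as a product (or a group whose rational homology is computed by a product), correctly track how the factor $\phi(m)$ multiplies the single-factor bound $s2^{s-1}$ through the Künneth formula, and handle the $\Q[W_G(C_m^i)]$-module structure so that tensoring over this group ring does not disturb the degree bound. The final claim, concerning the Leopoldt-Schneider Conjecture, is comparatively formal: this conjecture governs the nonvanishing and dimensions of $K_q(\Z[C])\tensor_\Z\Q$ for $q\ge0$ (equivalently the $K$-theory of rings of integers in cyclotomic fields), and its validity guarantees that the $q\ge0$ part of~\eqref{eq:FJ-sV} splits off as a genuine direct summand of $K_n(\Z[sV])\tensor_\Z\Q$ independently of the full Farrell-Jones Conjecture, since the relevant assembly map is rationally injective in that range by~\cite{LRRV}.
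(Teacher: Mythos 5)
Your proposal is correct and follows essentially the same route as the paper: the rationalised assembly-map splitting of \cite{LRRV}, Proposition~\ref{conjclass} for the index range $1\le i\le 2^{\phi(m)}-1$, the description of centralisers from \cite[Theorem 4.4]{BritaConch} as products of $\phi(m)$ extensions of $sV$ by locally finite kernels combined with a K\"unneth argument and the vanishing from Theorem~\ref{cont} (Corollary~\ref{cont-sV}) to get the bound $p\le\phi(m)s2^{s-1}$, and \cite[Theorem 1.11]{LRRV} with condition~[$A$] supplied by the $\F_\infty$ property of the centralisers \cite{confranbri} for the final injectivity statement. The only step you leave implicit that the paper spells out is the spectral-sequence argument showing each factor $Z_i$ has the rational homology of $sV$ itself, because its locally finite kernel $K_i$ is rationally acyclic.
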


We get the following almost immediate Corollary for the original Brin-Thompson groups, i.efor $n=2$ and $r=1$:

\begin{corollary}\label{cor} If the rationalised Farrell-Jones Conjecture in algebraic $K$-theory holds for~$sV$,
then $K_n(\Z[sV])\tensor_\Z\Q$ is isomorphic to

\[
\bigoplus_{1\le m}
\bigoplus_{i=1}^{2\alpha(m)}
\bigoplus_{\Phi(m)}
\bigoplus_{\substack{p+q=n\\0\leq p\\-1\le q}}
H_p(Z_{sv}(C^i_m);\Q)\tensor_{\Q[W_G(C^i_m)]}\Theta_{C_m}\Bigl(K_q(\Z[C_m^i])\tensor_{\Z}\Q\Bigr)
\]

\end{corollary}

The results used in the proof of this Theorem also imply that the homology groups $H_p(Z_G(C_m^i)$ can be computed in terms of the ordinary homology $H_p(sV)$ of $sV$.

% It turns out that condition [$B_{\mathcal{FC}}$]  is not necessarily a very strong condition as it is conjecturally always true. For detail and background see \cite{LRRV}. Our results imply that, provided [$B_{\mathcal{FC}}$] is satisfied, $K_n(\Z V)\otimes_\Z \Q$ is an infinite dimensional $\Q$-vector space for all odd $n$, and that $K_n(\Z (2V))\otimes_\Z \Q$ is an infinite dimensional $\Q$-vector space for all odd $n\geq 0$ with $n\neq 2.$

Finally, we remark that using \cite{LRRV,confranbri} one also deduces that for any valid, bounded, and complete Cantor algebra, the rationalised Whitehead group $Wh(V_r{(\Sigma)})\otimes_\Z \Q$ is an infinite dimensional $\Q$-vector space; compare Corollary~\ref{Wh}.

\section{The connectivity of the truncated Stein complex}

Throughout this section we will use the notation used in \cite{confranbri}.
Let $U_r(\Sigma)$ denote a valid, bounded, and complete Cantor algebra, and let $G=V_r(\Sigma)$ be its automorphism group.
The examples to keep in mind are those where $r=1$, and $G=V_1(\Sigma)$ is either the Higman-Thompson-group $G=V_{n,1}$ or the Brin-Thompson group $sV$. 

The group $V_r(\Sigma)$ are defined as follows (For detail the reader is referred to \cite[Section 2]{BritaConch}): Let $S=\{1,\ldots,s\}$ be a finite set of colours
and associate to each $i \in S$ an integer  $n_i>1$, called arity of the colour $i.$ For every $i\in S$ consider  the following right operations on a set $U$: 
\begin{itemize}
\item[(i)] One $n_i$-ary operation $\lambda_i\, :\,U^{n_i}\to U,$ and 
\item[(ii)] $\alpha_i:U\to U^{n_i}$ 
\end{itemize}
The maps $\alpha_i$ are called {\sl descending} operations, or expansions, and the maps $\lambda_i$ are called {\sl ascending} operations, or contractions.

Fix a finite set $X_r$ of cardinality $|X_r|=r$. We define the Cantor-algebra $U_r(\Sigma)$ on $X_r$  to free object on the set $X_r$ with respect to the previous operations to satisfying a certain set of laws $\Sigma.$ These laws can roughly be described as requiring that expanding and then contracting with the same colour, and vice versa, yields the identity, and furthermore, that any "commutativity relation" between expansions of different colours are all of the length two, i.e. involve at most two colours.

Let $B\subset U_r(\Sigma)$, $b\in B$ and $i$ a colour of arity $n_i$. The set
$$(B\setminus\{b\})\cup\{b\alpha_i^1,\ldots,b\alpha_i^{n_i}\}$$
is called a {\sl simple expansion} of $B$. Analogously, if $b_1,\ldots,b_{n_i}\subseteq B$ are pairwise distinct,
$$(B\setminus\{b_1,\ldots,b_{n_i}\})\cup\{(b_1,\ldots,b_{n_i})\lambda_i\}$$
is a {\sl simple contraction} of $B$.
A finite chain of simple expansions (contractions) is an expansion (contraction). A  subset $A\subseteq U_r(\Sigma)$ is called {\sl admissible} if it can be obtained from the set $X_r$ by finitely many expansions or contractions. If a subset $A_1$ is obtained from a subset $A$ by an expansion (simple or not), then we write $A\leq A_1$.

 $U_r(\Sigma)$ is said to be {\sl bounded} (see \cite[Definition 2.7]{BritaConch})  if for all admissible subsets $Y$ and $Z$ such that there is some admissible $A\leq Y,Z$, there is a unique  admissible subset $T$ such that $Y \leq T$ and $Z \leq T$, and whenever there is an admissible set $S$ also satisfying $Y\leq S$ and $Z\leq S$, then $T \leq S.$
It turns out \cite[Lemma 2.5]{desiconbrita2} and \cite[Theorem 2.5]{confranbri}, that for  any valid and bouded $U_r(\Sigma)$, admissible subset are bases and vice versa.

%For each basis $A \in U_r(\Sigma)$ we have a Morse function  $A\mapsto |A|.$ 

The set of bases $\mathcal{A}$ is a poset, and we consider its geometric realisation $|\mathcal{A}|,$ which is contractible if $U_r(\Sigma)$ is bounded. The Stein complex by $X$ \cite{stein}, denoted $\mathcal{S}_r(\Sigma)$ in~\cite{confranbri}, is the following subcomplex of $|{\mathcal{A}}|:$ 

 Let $B\leq A$ be bases of $U_r(\Sigma)$. We say that the expansion $B\leq A$ is {\sl elementary} if there are no repeated colours in the paths from leaves in $B$ to their descendants in $A$.  Since $\Sigma$ is complete, this condition is preserved by the relations in $\Sigma$. We denote an elementary expansion by $B\preceq A.$ 
 The vertices of the Stein complex $X$ are  given by the admissible subsets
of $U_r(\Sigma)$, and the  $k$-simplices are given by chains of  expansions $Y_0\leq\ldots\leq Y_k$, where $Y_0\preceq Y_k$ is an elementary expansion.

\begin{remark}\label{bigbasesrem}
The Stein complex is known to be contractible \cite{brown92, fluch++, confranbri}. By the same argument  as in the remark at the end of Section 4 of \cite{brown92}, nothing changes if one fixes an integer $q$ and replaces the Stein complex by  the Stein complex obtained by only considering bases of cardinality $\geq q.$
\end{remark}

\subsection*{The truncated Stein complex}  As in \cite{brown92} we will consider a truncated Stein complex.  For $1\leq p\leq q$ let $X_{p,q}$ denote the full subcomplex of $X$ generated by bases $A$ with $p \leq |A| \leq q$. Note that $\dim X_{p,q} \leq q-p.$

Recall from \cite[Lemma 3.4]{confranbri} that each basis has a unique maximal elementary descendant, denoted $\mathcal{E}(A)$, with $n_1\dotsm n_s|A|$ leaves obtained by applying all descending operations exactly once to each element of $A$.  
%Hence $\dim X_{p,q} = q-p$ if $q \leq Np$, where $N= n_1\dotsm n_s.$

\begin{remark} For $V$ we have $N=2$; for $V_k$ we have $N=k$; and for $sV$ we have $N=2^s$.
\end{remark}

The following result now immediately implies Theorem \ref{cont}.

\begin{theorem}\label{p0} Suppose that $s \geq 2$ and let $U_r(\Sigma)$ be a complete, valid, and bounded Cantor algebra with associated arities $n_1\leq\ldots\leq n_s$. For all $n\geq 1$
there is an integer $p_0$, depending on $n$, such that $X_{p,q}$ is $n$-connected for $p \geq p_0$ and $$q+1-p\geq\left\{
\begin{aligned} & (n+2)(n_s-1),\\
&(n+1)(N-1)\\
\end{aligned}\right.$$ 
\end{theorem}

\begin{proof}

Since the complex given by the union of
$$X_{p,p+tn} \subset X_{p,p+tn+1} \subset \dotsb$$
is contractible, see Remark \ref{bigbasesrem}, it suffices to show that there is some $p_0$ such that for $p\geq p_0$ and $q\geq p+tn$, the pair is  $(X_{p,q+1},X_{p,q})$ is $n$-connected, i.e., the inclusion $X_{p,q}\hookrightarrow X_{p,q+1}$
induces an isomorphism  between the homotopy groups $\pi_i$ for $0\leq i\leq n$. Exactly as in  \cite[Theorem 2]{brown92}, this will  be satisfied if for any $A$ with $|A|=q+1$, then
$$L^p(A)=|\{B\in X_{p,q}\mid B<A\}|$$
is $n$-connected. We follow the lines of the argument of \cite{fluch++}, and begin by showing  that
$$L^p_0(A)=|\{B\in X_{p,q}\mid B<A\, \text{very elementary}\}|$$
is $n$-connected.  Consider the complex $K_q,$ which was denoted $K_n$ in the proof of \cite[Lemma 3.8]{confranbri}. The vertices of this complex are pairs $(i,S)$, where $i$ is a color and $S$ is an ordered subset of $A$ whose cardinality is the arity $n_i$. A $k$-simplex is a set $\{(i_0,S_0),\ldots,(i_k,S_k)\}$ for which $S_0,\ldots,S_k$ are pairwise disjoint. As in \cite[Lemma 3.8]{confranbri}, the argument of \cite[Lemma 4.20]{brown2} shows that $K_q$ is $n$-connected if $q$ is big enough. Now, we claim that the condition $q+1-p\geq (n+2)(n_s-1)$ implies that the $n+1$ skeleton is precisely the barycentric subdivision of the $n+1$-skeleton of $K_q$. To see it, note that we only have to check that for any $n+1$-simplex $\{(i_0,S_0),\ldots,(i_{n+1},S_{n+1})\}$ in $K_q$, the result of performing the associated very elementary contraction in $A$ lies in $X_{p,q}$, in other words, that has at least $p$ elements. Equivalently, we need
$$p\leq q+1-\sum_{j=0}^{n+1}(n_{i_j}-1),$$
that is
$$q+1-p\geq \sum_{j=0}^{n+1}(n_{i_j}-1),$$
which holds since
$$(n+2)(n_s-1)\geq \sum_{j=0}^{n+1}(n_{i_j}-1).$$

Thus under this condition, we deduce that  $L^p_0(A)$ is also $n$-connected.

  As in \cite{confranbri}, \cite{fluch++},  one can define a  height-function  for each $B\in L^p(A)$ as follows: For every $i=2,...,s$, let $c_i$ be the number of leaves in $B$ whose descendants in $A$ have length $i$ and put
$$h(B) = (c_s,...,c_2, |B|).$$

Now let $B\in L^p(A)\setminus L^p_0(A)$ and consider its descending link $\text{lk}^p\downarrow(B)$ in $L_p(A)$ with respect to the height function $h$, but now truncating the basis of less thatn $p$ elements $p$. We claim that this link is $(n-1)$-connected. Using Morse theory (see \cite[Lemma 3.1]{fluch++}) this will yield the result.
We have
\[\text{lk}^p\downarrow(B)=\text{downlk}^p\downarrow(B)*\text{uplk}^p\downarrow(B),\]
where
\[\text{downlk}^p\downarrow(B)=|\{C\mid C<B,\,h(C)<h(B),\,p\leq|C|\}|\]
is the truncated downlink, and
\[\text{uplk}^p\downarrow(B)=\text{uplk}\downarrow(B)=|\{C\mid B<C,\,h(C)<h(B)\}|\]
is the uplink (truncation does not affect the uplink).

 As in \cite{fluch++}, we may assume that no leaf of $A$ is obtained by a simple expansion of a leaf in $B$, otherwise
the uplink is contractible and the claim holds true. This means that each leaf of $B$ yields either some number $l$ with $n_1< l\leq N=n_1\ldots n_s$ of leaves in $A,$ or remains invariant in $A$. Let $k_{B,l}$ be the number of leaves of $B$ yielding exactly $l$ leaves in $A$ and let $k_1$ be the number of the invariant leaves of $B$.  Put $k=\sum_{l=n_1+1}^{N}k_{B,l}$. Then
\begin{equation}\label{eq1}|B|=k+k_1,\end{equation}
\noindent and
\begin{equation}\label{eq2}|A|=\sum_{l=n_1+1}^{N}lk_{B,l}+k_1=q+1.\end{equation}

Set $|B|=b$ and let $C\in  \text{downlk}^p\downarrow(B)$, i.e., let $C<B$ such that $h(C)<h(B)$. Observe that this condition implies that 
$$(c^B_s,\ldots,c^B_2)=(c^C_s,\ldots,c^C_2)$$
where the superscript indicates the relevant basis. This means that $C$ is obtained from $B$ by contracting some of the $k_1$ leaves that are left invariant in $A$ and that each leaf is contracted at most once. Therefore, $\text{downlk}^p\downarrow(B)$ is equivalent to the complex associated to
$$L^{p-k}_0(A_1)$$
where $A_1$ has exactly $k_1$ elements. 

Now, we claim that $L^{p-k}_0(A_1)$ is $(n-k-1)$-connected. Observe that once the claim is proved, taking into account that the argument of \cite{fluch++} implies that the uplink is $(k-2)$-connected we will deduce that
$\text{lk}^p\downarrow(B)$ is $(n-1)$-connected as we wanted.  
\iffalse Note that if $n-k-1<-1$, then $n<k$ thus $n-1\leq k-2$. Also, if $k-2<-1$, then $k\leq 0$ which is impossible (because it would imply $B=A$).
\fi

To prove the claim, using the first part of the proof we only have to check that 
$$b-p=k+k_1-p\geq(n-k-1+2)(n_s-1),$$
equivalently, that 
$$(n_s-1)k+k_1-(p-k)+n_s-1\geq(n+2)(n_s-1).$$
To see it, note that 
$$|A|=q+1=\sum_{l=n_1+1}^Nlk_{B,l}+k_1\leq N\sum_{l=n_1+1}^Nk_{B,l}+k_1=Nk+k_1$$
thus, using that $k_1-(p-k)=k+k_1-p\geq 0$,
$$\begin{aligned}
(n_s-1)k+k_1-(p-k)+n_s-1\geq\\
 {n_s-1\over N-1}((N-1)k+k_1-(p-k))+n_s-1=\\
{n_s-1\over N-1}(Nk+k_1-p)+n_s-1\geq\\ 
{n_s-1\over N-1}(q+1-p)+n_s-1\geq\\
{n_s-1\over N-1}(n+1)(N-1)+n_s-1=\\ 
(n+1)(n_s-1)+n_s-1=(n+2)(n_s-1).
\end{aligned}$$

\end{proof}

\medskip\noindent
From now on fix a basis $A$ with $|A|=p$, and
let $Y_{p,q}=X_{p,q}/G.$
Consider the complex $\widehat Z_{p+1,q}$ with $m$-simplices of the form $\sigma:B_0<B_1<\dotsb<B_m$ with $|B_m|\leq q$, $A<B_0$ and $A\leq B_m$ elementary. We let the (finite) group
$$H=\{g\in G\mid gA=A\}$$
act on $\widehat Z_{p+1,q}$,
and set
$$Z_{p+1,q}=\widehat Z_{p+1,q}/H.$$
We also denote by $CZ_{p+1,q}$ the cone on $Z_{p+1,q}.$

%We also put
%$$CZ_{p+1,q}=\{B_0<\dotsb<B_{m}\mid |B_m|\leq q, A\leq B_m, A\leq B_0\text{ elementary}|\}/H,$$
%note that $CZ_{p+1,q}$ is the cone of $Z_{p+1,q}$.

 \begin{proposition}\label{tec} $Y_{p+1,q}$ is a subcomplex of $Y_{p,q}$, and there is an isomorphism of chain complexes
$$\mathcal{C}_*(Y_{p,q},Y_{p+1,q})\cong\mathcal{C}_*(CZ_{p+1,q},Z_{p+1,q})$$

and therefore a long exact sequence in homology
$$\dotsb\to\homolog{j}{Y_{p+1,q}}\to\homolog{j}{Y_{p,q}}\to\redhomolog{j-1}{Z_{p+1,q}}\to\homolog{j-1}{Y_{p+1,q}}\to\dotsb.$$

  \end{proposition}

\begin{proof} Note first that $CZ_{p+1,q}$ can be seen as the complex with $m$-simplices of the form $H(B_0<B_1<\dotsb<B_m)$ with $A\leq B_0$. The fact that $Y_{p+1,q}$ lies inside $Y_{p,q}$ is obvious. Moreover, if $G\sigma$ is an orbit with
$G\sigma\in Y_{p,q}\setminus Y_{p+1,q}$, then  $\sigma$ has the form $\sigma:A_0<A_1<\dotsb<A_m$ with $|A_0|=p$. We may choose some $g\in G$ with $gA_0=A$, and map
$$\varphi:G\bar\sigma\mapsto H(\bar\nu),$$
where $\nu=A<gA_1<\dotsb<gA_n$, and $\bar{\ }$ denotes passing to the quotient chain complex.
Note that if we choose some other $g'\neq g$ with $g'A_0=A$, then $g=hg'$ for some $h\in H$, thus
$$H(A<gA_1<\dotsb<gA_n)=H(A<g'A_1<\dotsb<g'A_n).$$
In particular this map does not depend on the choice of $g$. It is obvious that $\varphi$ induces an isomorphism at each degree of the chain complexes, we only have to show that this is in fact a chain map. But this follows from the fact that, if  $\delta$ is the boundary map, then
$$\delta(G\bar\sigma)=\sum_{i=1}^m(-1)^iG(\bar\tau_i)$$
with $\tau_i=A_0<\dotsb<A_{i-1}<A_{i+1}<\dotsb<A_m.$

Now use that $\homolog{j}{CZ_{p+1,q},Z_{p+1,q}}\cong\redhomolog{j-1}{Z_{p+1,q}}.$
\end{proof}

\begin{example}\label{S1} In the case when $G=2V$, the homotopy type of the following complexes are:
$$\begin{aligned} 
Y_{q,q}\text{ is a point,}\\
Y_{q-1,q}\text{ is }S^1,\\
Y_{q-2,q}\text{ is }S^1.\\
\end{aligned}
$$
\end{example}
\begin{proof} The fact that $Y_{q,q}$ is a point is obvious. Also, $Y_{q-1,q}$ is $S^1$: it has only two points (corresponding to the classes of basis of size $q-1$ and $q$ and two edges connecting them (corresponding to expansions of the 2 possible colours), we claim that the whole $Y_{q-2,q}$ can be contracted to this subcomplex. Consider first the six 1-cells in $Y_{q-2,q}$ which are the cosets of cells in $X_{q-2,q}$ of the form $A_{q-2}<A_{q}$ associated to one of the following possible diagrams (solid lines represent colour 1, dashed represent colour 2):

\bigskip

\begin{center}
\begin{tikzpicture}[scale=0.5]

  \draw[black, dashed]
    (0,0) -- (1, 1.51) -- (2,0);
 \draw[black] (-0.8,-1.51) -- (0,0) --(0.8,-1.51);
 
   \draw[black, dashed]
    (5,0) -- (6, 1.51) -- (7,0);
     \draw[black] (7-0.8,-1.51) -- (7,0) --(7.8,-1.51);

    \draw[black]
    (10,0) -- (11, 1.51) -- (12,0);
  \draw[black,dashed] (10-0.8,-1.51) -- (10,0) --(10+0.8,-1.51);
  
   \draw[black]
    (15,0) -- (16, 1.51) -- (17,0);
      \draw[black,dashed] (17-0.8,-1.51) -- (17,0) --(17.8,-1.51);

 %\draw[black,dashed] (-0.8,-1.51) -- (0,0) --(0.8,-1.51);

\draw[black, dashed]
    (2,-5) -- (3, 1.51-5) -- (4,-5);
\draw[black, dashed]
    (4.5,-5) -- (5.5, 1.51-5) -- (6.5,-5);

\draw[black]
    (10,-5) -- (11, 1.51-5) -- (12,-5);
\draw[black]
    (12.5,-5) -- (13.5, 1.51-5) -- (14.5,-5);

   \end{tikzpicture}
\end{center}

   \bigskip
   Each of these 1-cells in is in the boundary of a single 2-cell of $Y_{q-2,q}$.
We begin by pushing these 1-cells along the corresponding 2-cell to contract it to the other two 1-cells of its boundary. In this way we get a smaller homotopy equivalent complex having only two 2-cells $\sigma_1$ and $\sigma_2$ with 
$\sigma_1$ the class of
$A_{q-2}<_1A_{q-1}<_2 A_{q}$ where the first is a simple expansion of color 1 and the second is a simple expansion of color 2 and  $\sigma_2$ is the class 
of
$A_{q-2}<_2A_{q-1}<_1 A_{q}$ where the color are interchanged. In fact this complex can be visualized as a number eight shape with two 2-cells  sharing a common edge glued along each of the sides of the eight. The 1-skeleton of this complex is
\bigskip

\begin{center}
\begin{tikzpicture}[scale=0.6]

 \draw[black] (0,2) arc (90:270:2);
 \draw[black] (0,2) arc (90:270:1);
 \draw[black,dashed] (0,0) arc (90:270:1);
 \draw[black,dashed] (0,0) arc (-90:90:1);
 \draw[black] (0,-2) arc (-90:90:1);

   \filldraw(0,2) circle (2pt) node[right=10pt]{$A_{q-2}$};
     \filldraw(0,0) circle (2pt) node[right=10pt]{$A_{q-1}$};

  \filldraw(0,-2) circle (2pt) node[right=10pt]{$A_{q}$};

   \end{tikzpicture}
\end{center}
   \bigskip
and the whole complex can be contracted to the copy of $S^1$ at the bottom by sending $A_{q-2}$ to $A_{q}$ along the  1-cell joining them  and pushing the 1-cells on the top to the 1-cells in the bottom.

\end{proof}

\section{Conjugacy classes of finite cyclic subgroups in~$sV_{n,r}$}

For the next result we need to introduce some notation. Let $m>0$ be an integer number. Let
$A_m$ be the set with elements 
$$\{s_1,\ldots,s_t\mid 0< s_i\neq 1,\, s_i\text{ pairwise distinct}\text{ and }\mathrm{lcm}(s_1,\ldots,s_t)=m\}$$
and let
$$\alpha(m)=|A_m|.$$
For example,  for any prime number $p$ we have $\alpha(p)=1$ and $\alpha(p^l)=2^{l-1}$. 

Now, for each $\Omega=\{s_1,\ldots,s_t\}\in A_m$, let 
$B(\Omega)$ be the set with elements of the form $\{k_{s_1},\ldots,k_{s_t}\}$ such that the $k_{s_i}$'s are integers with $0< k_{s_i}\leq n-1$ and also  
let  $B_{m,n}$ be the disjoint union of all the sets $B(\Omega)$ where $\Omega$ lies in $A_m$. Note that the cardinality of $B_{m,n}$ is
$$|B_{m,n}|=\sum_{\Omega\in A_m}(n-1)^{|\Omega|}$$
and in the particular case when $n=2$, we have
$$|B_{m,2}|=|A_m|=\alpha(m).$$
Finally, let
$$\beta(m,n,r)=|\{\{k_{s_1},\ldots,k_{s_t}\}\in B_{m,n}\mid\sum_{i=1}^tk_{s_i}s_i\equiv r\text{ mod }n-1\}|$$
and note that in the case when $n=2$ we have
$$\beta(m,2,r)=|B_{m,2}|=|A_m|=\alpha(m).$$

$\Omega$ be an element of $A_m$, i.e., $\Omega=\{s_1,\ldots s_t\}$ with the elements $s_i$ as before. 

\begin{proposition}\label{conjclass} With the previous notation, for any $m>0$   the number of conjugacy classes of cyclic groups of order $m$ in $sV_{2,r}$ is
$$2\alpha(m).$$
In the general case, the number of conjugacy classes of cyclic groups of order $m$ in $sV_{n,r}$ is
$$\sum_{\Omega\in A_m}(n-1)^{|\Omega|}+\beta(m,n,r).$$
 \end{proposition}
\begin{proof} We claim first that the number of conjugacy classes of cyclic subgroups of order~$m$ equals the cardinality of the set $C(n,r,m)$ of the possible
$$\{k_1,k_{s_1},\ldots,k_{s_t}\}$$ such that $0\leq k_1\leq n-1$, $\{k_{s_1},\ldots,k_{s_t}\}$ belongs to the set $B_m$ defined before and 
\begin{equation}\label{card}k_1+\sum_{i=i}^tk_{s_i}s_i\equiv r \mod n-1.\end{equation}
The claim follows essentially from \cite[Proposition 4.2 and Theorem 4.3]{BritaConch},  but we briefly recall the argument here.
The main idea goes back to Higman (see \cite{higman} and also  \cite{WarrenConch}): one can prove that any finite subgroup $H\leq sV_{n,r}$ acts on a certain $n$-ary $r$-rooted forest by permuting the set of leaves $L$. That set of leaves can be split into transitive subsets, each corresponding to a particular type as a permutation representation, and at least one of the orbits must be faithful. The number of orbits of a certain type can be modified modulo $n-1$ just by passing to subtrees.  This yields the same group, yet note that if some type does not appear it can not be created. Any other copy  $H_1\leq sV_{n,r}$ of the same group is conjugate to $H$ in $sV_{n,r}$ if and only if,  in
both groups for each type, the set of orbits of that precise type is either zero in both, or non zero in both and congruent modulo $n-1$.

In the particular case of a cyclic group of order $m$, the types of permutation representations correspond to the possible divisors of $m$,  more precisely, to each $s\mid m$ we may associate the transitive permutation representation of length $s$. The number $k_s$ above refers to the number of orbits of length $s.$ For this permutation representation to be faithful we need the condition that $\mathrm{lcd}(s_1,\ldots,s_t)=m$ that was used to define $A_m$.  And from the fact that our group is defined using an $r$-rooted forest we get  (\ref{card}). This proves the claim.

Next, observe that for each choice of $\{k_{s_1},\ldots,k_{s_t}\}$ in $B_{m,n}$, we must have
$$k_1\equiv r-\sum_{i=i}^tk_{s_i}s_i$$
modulo $n-1$ which
yields exactly one possible value of $k_1,$ in tha case when  $\sum_{i=i}^tk_{s_i}s_i\neq r$ modulo $n-1$ and exactly two values in the other case. Finally, the particular case when $n=2$ follows from the observations before the statement.
 
\end{proof}

More detailed formulas for the special case when $m=p^a$ for a prime $p$ can be found in \cite{higman} or \cite{WarrenConch}.

\section{Assembly maps and algebraic K-theory}\label{FJC}

In this final section we explain an application of our computations to algebraic $K$-theory.
We begin by recalling the rationalised version of the Farrell-Jones Conjecture.
For more details and background information we refer to the introduction of~\cite{LRRV} and to the references cited there.

Let $G$ be a group.
Denote by~$(\mathcal{FC})$ the set of conjugacy classes of finite cyclic subgroups of~$G$.
The centraliser of a subgroup $C \leq G$ is denoted $Z_G(C)$ and
the Weyl group $W_G(C)$ is defined as the quotient $W_G(C)=N_G(C)/Z_G(C)$ of the normaliser modulo the centraliser.
Notice that, when $C$ is finite, then $W_G(C)$ is finite, too.

For any group~$G$ and any~$n\in\Z$ there is a natural homomorphism
\begin{equation}
\label{eq:FJ}
\bigoplus_{(C)\in(\mathcal{FC})}
\bigoplus_{\substack{p+q=n\\p\ge0\\q\ge-1}}
H_p(Z_G(C);\Q)\tensor_{\Q[W_G(C)]}\Theta_C\Bigl(K_q(\Z[C])\tensor_{\Z}\Q\Bigr)
\to
K_n(\Z[G])\tensor_\Z\Q
\,,
\end{equation}
called \emph{rationalised Farrell-Jones assembly map}.
The \emph{rationalised Farrell-Jones Conjecture} asserts that \eqref{eq:FJ} is an isomorphism for every~$G$ and every~$n\in\Z$.

Here $\Theta_C$ is an idempotent endomorphism of $K_q(\Z[C])\tensor_{\Z}\Q$, whose image is a direct summand of $K_q(\Z[C])\tensor_{\Z}\Q$ isomorphic to
\begin{equation}
\label{eq:Artin-defect}
\coker\Biggl(\, \bigoplus_{D\lneqq C}
     K_q(\Z[D])\tensor_{\Z}\Q \to K_q(\Z[C])\tensor_{\Z}\Q \Biggr).
\end{equation}
The Weyl group acts via conjugation on $C$ and hence on $\Theta_C(K_q(\Z[C])\tensor_{\Z}\Q)$.
The Weyl group action on the homology comes from the fact that the space $EN_G(C)/Z_G(C)$ is a model for $BZ_G(C)$.
The dimensions of the $\Q$-vector spaces in~\eqref{eq:Artin-defect} are explicitly computed in~\cite[Theorem on page~9]{patronas} for any~$q$ and any finite cyclic group~$C$.

The following injectivity result about the Farrell-Jones Conjecture is proven in~\cite{LRRV}.

\begin{theorem}\cite[Theorem 1.13]{LRRV}
\label{LRRV}
Suppose that the following conditions are satisfied for each finite cyclic subgroup $C$ of $G$.
\begin{itemize}
\item[{[$A$]}] For every $p\geq1$, $H_p(Z_G(C);\Z)$ is a finitely
generated abelian group.
\item[{[$B$]}] For every $q\geq0$, the natural
homomorphism
\[
K_q(\Z[\zeta_c])\tensor_{\Z}\Q \to \prod_{\text{$\ell$ prime}}K_t(\Z_\ell\otimes_\Z\Z[\zeta_c],\Z_\ell)\tensor_{\Z}\Q
\]
is injective, where $c$ is the order of $C$ and $\zeta_c$ is any primitive $c$-th root of unity.
\end{itemize}
Then the restriction of the rationalised Farrell-Jones assembly map~\eqref{eq:FJ} to the summands with $q\geq 0$ is injective for each~$n$.
% $$\bigoplus_{C\in \mathcal{(FC)}}\bigoplus_{\begin{subarray}{c} k+t=n \\ k,t \geq 0\end{subarray}} H_k(Z_G(C), \Q) \otimes_{\Q(W_G(C))} \Theta_H(K_t(\Z C) \otimes_\Z \Q) \to K_n(\Z G)\otimes_\Z \Q.$$
\end{theorem}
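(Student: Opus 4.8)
The plan is to detect the rational $K$-theory appearing in~\eqref{eq:FJ} by means of the cyclotomic trace map, and thereby to reduce the injectivity statement to two separate inputs---one homotopy-theoretic and one number-theoretic---corresponding respectively to conditions~$[A]$ and~$[B]$. One works throughout with the equivariant homology theory $H^G_*(-;\mathbf{K}_\Z)$ associated to the algebraic $K$-theory spectrum of $\Z[-]$, so that~\eqref{eq:FJ} is the rationalisation of the assembly map induced by collapsing the relevant classifying space to a point. Since the summands of~\eqref{eq:FJ} with $q\ge0$ are indexed by finite cyclic subgroups and are cut out of $K_q(\Z[C])\tensor_\Z\Q$ by the idempotent~$\Theta_C$, it suffices to work with the classifying space~$\eg$ for the family of finite subgroups; the passage from~$\eg$ to~$\uueg$ introduces only Nil-terms, which do not meet the summands under consideration. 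L\"uck's rationalised equivariant Chern character identifies $H^G_n(\eg;\mathbf{K}_\Z)\tensor_\Z\Q$ degreewise with the direct sum displayed in~\eqref{eq:FJ} and exhibits the latter as the geometric assembly map; here condition~$[A]$ is essential, since finite generation of $H_p(Z_G(C);\Z)$ guarantees that rationalisation commutes with the homology groups and with the inverse limits implicit in the constructions below.

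The heart of the argument is the commutative square obtained by applying the cyclotomic trace $\operatorname{trc}\colon\mathbf{K}\to\operatorname{TC}$ to the $K$-theory spectrum of $\Z[-]$. This produces, in each degree, a map from the $K$-theory assembly map of~\eqref{eq:FJ} to the analogous assembly map built from topological cyclic homology $\operatorname{TC}(\Z[-])$. I would deduce injectivity of the $K$-theory map on the $q\ge0$ summands from two facts: first, that the assembly map for the trace target is rationally split injective; and second, that the cyclotomic trace is rationally injective on each summand $\Theta_C\bigl(K_q(\Z[C])\tensor_\Z\Q\bigr)$ with $q\ge0$. Granting these, commutativity of the square forces the $K$-theory assembly map to be injective on the summands in question.

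For the first fact one exploits the strong formal properties of topological Hochschild and cyclic homology---in particular their induction behaviour and the description of $\operatorname{TC}$ as a homotopy limit assembled primewise from the theories at the individual primes $\ell$---to produce a natural rational splitting of the $\operatorname{TC}$-assembly map; condition~$[A]$ re-enters here in order to interchange the resulting product over primes with the homology of the centralisers and with rationalisation. For the second fact one uses that, for $C$ cyclic of order $c$, the rational group algebra $\Q[C]$ decomposes as a product of cyclotomic fields indexed by the divisors of $c$, so that $K_q(\Z[C])\tensor_\Z\Q$ is built from the groups $K_q(\Z[\zeta_d])\tensor_\Z\Q$ for $d\mid c$; the image of $\Theta_C$ is exactly the primitive factor attached to $\Z[\zeta_c]$, and rational injectivity of the trace on it is precisely the content of condition~$[B]$, which records that the global $K$-groups of $\Z[\zeta_c]$ inject into the product of their $\ell$-adic localisations.

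The main obstacle I anticipate lies in the simultaneous control of the trace target: proving that the $\operatorname{TC}$-assembly map splits rationally, while keeping the comparison with $K$-theory exact and compatible with the Chern-character decomposition, requires one to interchange the product over all primes~$\ell$ with the homology of the centralisers and with $-\tensor_\Z\Q$. It is precisely to legitimise these interchanges---and to ensure that no information is lost in passing between the global and the local $K$-groups---that both the finiteness hypothesis~$[A]$ and the local-to-global injectivity~$[B]$ are required. Once both are in force, the two inputs above combine through the comparison square to yield injectivity of the rationalised assembly map~\eqref{eq:FJ} on all summands with $q\ge0$.
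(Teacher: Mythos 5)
First, a point of order: this statement is not proved in the paper at all --- it is quoted verbatim as Theorem~1.11 of \cite{LRRV}, and its only role here is as an input to Theorem~\ref{assemble}. So the only meaningful comparison is with the proof in \cite{LRRV} itself. At the level of strategy, your outline is a faithful reconstruction of that proof: the argument there does proceed by mapping the $K$-theoretic assembly map via the cyclotomic trace to an assembly map for a topological cyclic homology target, establishing (split) injectivity on the trace side, and detecting the summands $\Theta_C\bigl(K_q(\Z[C])\tensor_\Z\Q\bigr)$ for $q\ge0$ through the decomposition of $\Q[C]$ into cyclotomic fields, with condition~[$B$] supplying exactly the local-to-global injectivity for $K_*(\Z[\zeta_c])\tensor_\Z\Q$ and condition~[$A$] legitimising the interchange of rationalisation with the homology of centralisers and with the limits and products over primes inherent in $\operatorname{TC}$. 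You have also correctly located the restriction to $q\ge0$ (the trace cannot see the $q=-1$ summands), and your aside about Nil-terms is harmless but unnecessary, since the source of \eqref{eq:FJ} is already expressed in terms of finite cyclic subgroups and no change of families enters the statement being proved.

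That said, as a proof your text is a road map rather than an argument: the two pillars you explicitly ``grant'' --- rational split injectivity of the assembly map for the trace target, and rational injectivity of the trace on the $\Theta_C$-summands --- are not reductions but constitute essentially the entire technical content of \cite{LRRV}. Moreover, the first pillar is not quite correct as stated: rationalisation genuinely fails to commute with the homotopy limits defining $\operatorname{TC}$, so there is no direct ``rationally split $\operatorname{TC}$-assembly map'' to invoke; in \cite{LRRV} the splitting is proved at the level of $\operatorname{THH}$- and $\operatorname{TC}(-;p)$-type functors prime by prime, after suitable completion, using the equivariant structure of topological Hochschild homology, and condition~[$A$] is then needed precisely to control $\lim^1$-terms and to pass the rationalisation through. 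Likewise the second pillar requires, beyond the algebraic decomposition of $\Q[C]$, the comparison theorems between $K$-theory and $\operatorname{TC}$ for $p$-complete rings (Dundas, Hesselholt--Madsen) before condition~[$B$] can be brought to bear. You anticipate these difficulties in your final paragraph, which is to your credit, but you do not resolve them; so the verdict is: correct strategy, matching the source's proof in outline, with the substantive steps asserted rather than established.
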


We remark that condition~[$B$] is conjecturally always true; more precisely, it is true if the Leopoldt-Schneider Conjecture in algebraic number theory holds for all cyclotomic fields.
For more details we refer to~\cite[Section~2]{LRRV}.

We can now prove Theorem~\ref{assemble}.

% \begin{theorem}
% Let $G=sV$ be a Brin-Thompson group and suppose that condition [$B_{\mathcal{FC}}$] is satisfied.
% Then the restriction of the rational Farrell-Jones assembly map to the summands with $t\geq 0$ gives an injective map

% $$\bigoplus_{m\geq 1}\bigoplus_{i=1}^{2^{\phi(m)}-1}\bigoplus_{\begin{subarray}{c} r+t=n, t \geq 0, \\ 0\leq r < \phi(m)s2^{s-1}\end{subarray}} H_r(Z_G(C^i_m), \Q) \otimes_{\Q(W_G(C^i_m))} \Theta_{C_m^i}(K_t(\Z C^i_m) \otimes_\Z \Q) \to K_n(\Z G)\otimes_\Z \Q,$$
% where $C^i_m$ denote the representatives of the conjugacy classes of finite cyclic groups of order $m$ in $sV.$
% \end{theorem}

\begin{proof}[Proof of Theorem~\ref{assemble}]
For the first  statement, we only need to use in the source of the rationalised Farrell-Jones assembly map~\eqref{eq:FJ} the computation of the number of conjugacy classes of finite cyclic subgroups of Proposition \ref{conjclass}.
This establishes the first statement of the theorem.

The last statement is then a consequence of Theorem~\ref{LRRV}, because \cite[Theorems 3.1 and 4.9]{confranbri} imply that $sV_{n,r}$ and all centralisers of finite subgroups are of type $\F_\infty$, and therefore condition~[$A$] is satisfied.
\end{proof}

With a little bit of work we can now prove Corollary \ref{cor}:
\begin{proof}[Proof of Corollary \ref{cor}]
Using Proposition~\ref{conjclass} we obtain the number of conjugacy classes of finite cyclic subgroups of order $m$.
It was shown in \cite[Theorem 4.4]{BritaConch} that each centraliser is decomposed into a direct product $Z_{sV}(C_m)=Z_1\times \cdots\times Z_l$, where $l$ is the number of transitive permutation representations of $C_m$, and where each $Z_i$ fits into a short exact sequence of groups
$$0\to K_i \to Z_i \to sV\to 0$$
with $K_i$ a locally finite group. As already observed in the proof of Proposition \ref{conjclass}, $l=\phi(m).$ Now an easy spectral sequence argument shows that for all $p\geq 0$
$$H_p(Z_i;\Q) \cong H_p(sV;\Q).$$
\end{proof}

\begin{remark}
The Weyl groups $W_{sV}(C)$ for finite subgroups of~$sV$ were described in \cite[Theorem 5.1]{confranbri}. With the notation used in the proof of Proposition \ref{conjclass}, for each finite cyclic group of order $m$ there is a set of leaves $Y$ of a fixed order $|Y|=\sum_{s|m}sk_s,$  where $0\leq k_s\leq 1$ and $k_m=1.$ Note that here $n=2.$ Hence, for each representative $C_m^i$ of the conjugacy classes of cyclic subgroups of order $m$, there is a $Y_i$ with
$m \leq |Y_i|\leq \sum_{s|m}s$ and
$$W_{sV}(C_m^i)= N_{S(Y_i)}(C^i_m)/Z_{S(Y_i)}(C_m^i),$$
where $S(Y_i)$ denotes the symmetric group on $|Y_i|$ letters.
\end{remark}

Finally, we remark that \cite[Theorem 1.1]{LRRV} applies to general automorphism groups of Cantor algebras because of \cite[Theorems 3.1 and 4.9]{confranbri}, and immediately gives the following result about their Whitehead groups.

\begin{corollary}
\label{Wh}
Let $G=V_r(\Sigma)$ be the automorphism group of a valid, bounded, and complete Cantor algebra.
Then there is an injective homomorphism
$$\colim_{\OFG}Wh(H)\otimes_\Z \Q \to Wh(G)\otimes_\Z \Q,$$
where the colimit is taken over the orbit category of finite subgroups of~$G$.
In particular, $Wh(G)\otimes_\Z \Q$ is an infinite dimensional $\Q$-vector space.
\end{corollary}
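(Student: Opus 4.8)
The plan is to deduce the entire statement from \cite[Theorem~1.16]{LRRV}, exactly as announced in the remark preceding the corollary. First I would record that the standing hypotheses of that theorem---namely that $G$ and the centralisers $Z_G(C)$ of all its finite subgroups $C$ are of type $\F_\infty$---hold in our situation: this is precisely the content of \cite[Theorems 3.1 and 4.9]{confranbri}, which apply to every automorphism group of a valid, bounded, and complete Cantor algebra. Feeding $G=V_r(\Sigma)$ into \cite[Theorem~1.16]{LRRV} then produces the desired injective homomorphism $\colim_{\OFG}Wh(H)\otimes_\Z\Q\to Wh(G)\otimes_\Z\Q$, so the first assertion requires no further work.

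The real content is the \emph{in particular} clause, and since the map is injective it suffices to show that its source $\colim_{\OFG}Wh(H)\otimes_\Z\Q$ is infinite-dimensional. I would analyse this colimit rationally. After rationalisation the functor $H\mapsto Wh(H)\otimes_\Z\Q$ is detected on cyclic subgroups by Artin induction, and the colimit over the orbit category splits according to the conjugacy classes $(C)$ of finite cyclic subgroups of $G$; the contribution of a class is governed by the ``new'' part $\Theta_C(Wh(C)\otimes_\Z\Q)$, the summand of the rational Whitehead group not induced from proper subgroups of $C$ (the Whitehead-group analogue of~\eqref{eq:Artin-defect}), taken modulo the action of the finite Weyl group $W_G(C)$. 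The idempotent $\Theta_C$ is the key device: by design it discards everything induced from proper subgroups, so that distinct conjugacy classes contribute independently and cannot cancel one another.

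To supply infinitely many nonzero contributions I would use that $V_r(\Sigma)$ contains infinite torsion; concretely, by the analysis of finite subgroups in \cite{BritaConch} it possesses finite cyclic subgroups of arbitrarily large order, and for $sV$ one even has the explicit count of Proposition~\ref{conjclass}. For a finite cyclic group $C$ the rank of $Wh(C)$ grows without bound with $|C|$, and the ranks of the new parts $\Theta_C(Wh(C)\otimes_\Z\Q)$ are likewise unbounded; these are computed explicitly in \cite{patronas}. Thus there is an abundant supply of nonzero, $\Theta$-independent pieces indexed by distinct conjugacy classes.

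The main obstacle is non-cancellation in the colimit, and more precisely the behaviour under the Weyl-group action. The transition maps from proper subgroups are handled structurally by the idempotents $\Theta_C$, but one must still ensure that passing to $W_G(C)$-coinvariants (equivalently, over $\Q$, to invariants) does not annihilate $\Theta_C(Wh(C)\otimes_\Z\Q)$ for infinitely many $C$. Here I would use the explicit description of the Weyl groups $W_{sV}(C_m^i)$ as normaliser-modulo-centraliser quotients inside symmetric groups from \cite[Theorem~5.1]{confranbri} (recalled in the remark above), together with the freedom provided by the many conjugacy classes of each order furnished by Proposition~\ref{conjclass}, to exhibit infinitely many classes whose coinvariant contribution survives. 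This is exactly the mechanism carried out for Thompson's group~$T$ in \cite{GV}; the same method applies here and yields that $Wh(G)\otimes_\Z\Q$ is infinite-dimensional.
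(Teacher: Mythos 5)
Your first paragraph is, in fact, the paper's \emph{entire} proof: the authors verify the hypotheses of \cite[Theorem~1.16]{LRRV} via \cite[Theorems 3.1 and 4.9]{confranbri} (type $\F_\infty$ for $G=V_r(\Sigma)$ and for all centralisers of finite subgroups, hence finite generation of $H_p(Z_G(C);\Z)$), and they take that theorem to deliver both the injectivity \emph{and} the ``in particular'' clause at once; no computation of the colimit is carried out in the paper. So everything after your first paragraph is your own addition, and it is there that a genuine gap sits. The decisive step in your plan --- exhibiting infinitely many conjugacy classes $(C)$ whose coinvariant contribution $\bigl(\Theta_C(Wh(C)\tensor_\Z\Q)\bigr)_{W_G(C)}$ survives, ``by the same mechanism as for $T$'' --- does not go through. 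The Geoghegan--Varisco analysis of $T$ \cite{geoghegan-varisco} works precisely because there $N_T(C)=Z_T(C)$, i.e.\ the Weyl groups are trivial. Here the situation is the opposite extreme: by the paper's own remark quoting \cite[Theorem~5.1]{confranbri}, $W_{sV}(C_m^i)=N_{S(Y_i)}(C_m^i)/Z_{S(Y_i)}(C_m^i)$, and for a cyclic subgroup of a symmetric group this quotient is the \emph{full} automorphism group $(\Z/m)^{\times}$: for every $a$ prime to $m$ the permutation $\sigma^a$ has the same cycle type as $\sigma$ on $Y_i$ (every orbit length divides $m$), hence is conjugate to $\sigma$ in $S(Y_i)$. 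But the coinvariants of $\Theta_{C_m}(Wh(C_m)\tensor_\Z\Q)\cong\Z[\zeta_m]^{\times}\tensor_\Z\Q$ under the full Galois group $(\Z/m)^{\times}$ vanish: by Dirichlet's unit theorem this module is $\operatorname{Ind}_{\langle\tau\rangle}^{\operatorname{Gal}(\Q(\zeta_m)/\Q)}\Q$ minus one trivial summand, where $\tau$ is complex conjugation, so its trivial isotypic component is zero. (Compare $Wh(D_5)\tensor_\Z\Q\cong\Q$, where the Weyl group is $\langle\tau\rangle$, with $Wh(C_5\rtimes C_4)\tensor_\Z\Q=0$, where it is all of $\operatorname{Aut}(C_5)$.)

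Consequently, within the decomposition you set up, the full-Weyl-group classes contribute \emph{nothing}, and your route produces no surviving pieces rather than infinitely many; the unboundedness of the Patronas ranks is irrelevant once the coinvariants are taken. There is also a scope mismatch: Proposition~\ref{conjclass} and \cite[Theorem~5.1]{confranbri} concern the groups $sV_{n,r}$, whereas the corollary is asserted for every automorphism group $V_r(\Sigma)$ of a valid, bounded, complete Cantor algebra, so even if your non-cancellation step could be repaired it would not cover the stated generality. The correct reading of the paper is that the infinite-dimensionality of the source is part of what \cite[Theorem~1.16]{LRRV} is being invoked for (its hypotheses are exactly what you verified in your first paragraph); if you want a self-contained argument, it cannot proceed through the $\Theta_C$-coinvariants of cyclic subgroups as you propose, and your sketch as written would need to be replaced, not merely completed.
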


%%%%%%%%%%%%%%%%%%%%%%%%%%%%%%%%%%%%%%%%%%%%%%%%%%%%%%%%%%%%%%%%%%%%%%%%%%%%%%%

%\begin{bibdiv}
%\begin{biblist}

%\end{biblist}
%\end{bibdiv}

\end{document}